\newtheorem{theorem}{Theorem}[section]
\newtheorem{lemma}[theorem]{Lemma}
\newtheorem{remark}{Remark}
\newenvironment {proof} {{\it Proof.}}{\hspace*{\fill}$\Box$\par\vspace{4mm}}
\newcommand{\mc}{\mathcal}
\newcommand{\mb}{\mathbb}
\begin{document}

\title{Stochastic maximum principle with Lagrange multipliers and optimal consumption with L\'{e}vy wage}


\author{K. R. Dahl\ 
\and E. Stokkereit 
}

\footnotetext{The research leading to these results has received funding from the European Research Council under the European Community's Seventh Framework Programme (FP7/2007-2013) / ERC grant agreement no. 228087.}


\maketitle

\begin{abstract}

We show how a stochastic version of the Lagrange multiplier method can be combined with the stochastic maximum principle for jump diffusions to solve certain constrained stochastic optimal control problems. Two different terminal constraints are considered; one constraint holds in expectation and the other almost surely.

As an application of this method, we study the effects of inflation- and wage risk on optimal consumption. To do this, we consider the optimal consumption problem for a budget constrained agent with a L{\'e}vy income process and stochastic inflation. The agent must choose a consumption path such that his wealth process satisfies the terminal constraint. We find expressions for the optimal consumption of the agent in the case of CRRA utility, and give an economic interpretation of the adjoint processes. 
\end{abstract}

\textbf{Keywords:} Stochastic control \and wage \and inflation \and maximum principle \and stochastic Lagrange multiplier.

\section{Introduction}
\label{sec: Intro}

This paper derives a stochastic Lagrange multiplier method, to solve constrained optimal control problems for jump diffusions. This can be used in combination with methods of optimal control, such as the stochastic maximum principle. Two different terminal constraints are considered, one that holds in expectation (soft constraint), and one that holds almost surely (hard constraint). Moreover, this method is used to analyze an interesting optimal consumption problem with wage jumps and stochastic inflation.

The problem of determining optimal lifetime consumption under uncertainty dates back to the seminal papers of Merton~\cite{Merton69}, \cite{Merton70}. These papers treat the problem of portfolio choice in continuous time in a complete market. In particular, there is no risk in the wage-level and the price of the consumption good is constant. This is explored further by Karatzas and Shreve~\cite{KaratzasShreve}, by using a so-called martingale method to handle market incompleteness. Karatzas~\cite{KaratzasEtAl} also considers a dynamic, stochastic economy with several heterogeneous agents. 


To analyze our version of the optimal consumption problem, we first impose a constraint on the expected terminal level of savings. This constraint transfers all the risk to the relevant financial institution (bank), and the consumers behave as if the market was complete. We assume that the agents have constant relative risk aversion (CRRA) utility functions and seek to maximize expected utility over a finite time horizon. Consequently, we are able to arrive at an explicit expression for an agent's optimal consumption process. Second, we impose an almost sure constraint on the terminal level of savings. This constraint is similar to the concept of admissibility widely used in the finance literature (see e.g. Karatzas and Shreve ~\cite{KS}), and makes the consumers bear all market risk. Thus, two extremes of risk sharing are considered.


A motivation for studying a consumption optimization problem with stochastic income and a CRRA utility function is found in Zeldes~\cite{Zeldes}. The paper argues that such uncertainties dramatically affects the consumption function, and links this to three classical empirical consumption puzzles. The model in Zeldes~\cite{Zeldes} uses discrete time and numerical approximations for the solution. In contrast, we consider continuous time and focus on analytical solutions.

A paper which, similarly to our paper, has a more analytical approach to such a consumption optimization problem is El Karoui and Jeanblanc-Picqu{\'e}~\cite{ElKarouiJeanblanc}. The authors solve the consumption-portfolio problem for an agent with a stochastic, insurable income under a liquidity constraint. Opposed to our situation, El Karoui and Jeanblanc-Picqu{\'e} assume that the wage is not a source of new uncertainty, and they have a liquidity constraint which prohibits all borrowing against future income.

Koo~\cite{Koo} also considers a consumption-portfolio problem for a liquidity constrained agent, but who has an uninsurable income risk. In this case, the wage process is modeled using only a Brownian motion. This is contrary to our situation, where the wage process is a jump process. Note that none of the articles mentioned so far specifically consider the inflation risk, as we do in this paper.

It is interesting to include inflation risk because having a stochastic future consumption price adds more realism to the model, yet it does not make the problem significantly more complicated. A paper which does consider the inflation risk, as well as the income risk, is Battocchio and Menoncin~\cite{BattocchioMenoncin}. However, their wage process does not include jumps (only a Brownian motion).


The structure of this paper is as follows. In Section~\ref{sec: StochLagrange} we introduce a general stochastic optimal control problem with constraints, and prove a stochastic Lagrange multiplier method for solving this type of problem. This is the theoretical and methodical foundation for the rest of the paper. Then, in Section~\ref{sec: Model}, we treat an interesting application from economy in detail: We introduce a specific stochastic control problem involving inflation- and wage risk. In Section~\ref{sec: Solution} the problem is solved using the stochastic multiplier method of Section~\ref{sec: StochLagrange}. We consider the case with a soft end constraint (i.e., the constraint holds in expectation), where an explicit solution is derived using the Lagrange multiplier method and the stochastic maximum principle. 



\section{A stochastic Lagrange multiplier method}
\label{sec: StochLagrange}

Let $(\Omega, \mc{F}, P)$ be a probability space, where $\Omega$ is the scenario space, $\mc{F}$ a $\sigma$-algebra and $P$ the probability measure. We consider continuous time, $t \in [0,T]$. Let $\{B(t)\}_{t \in [0,T]}$ be a Brownian motion, and $\int_{\mb{R}} z \tilde{N}(dt,dz)$ a pure jump process independent of $B(t)$. Let $\{\mc{F}_t\}_{t \in [0,T]}$ be the filtration generated by the Brownian motion and the pure jump process. In the following, by an adapted processes, we mean adapted with respect to this filtration.

Also, let $f : \mb{R}_+ \times \mb{R} \times \mb{R} \rightarrow \mb{R}$ and $g: \mb{R} \rightarrow \mb{R}$ be given, continuous functions. Then, we consider the stochastic optimal control problem which comes in two versions $(i)$ and $(ii)$:
\begin{equation}
 \label{eq: OpprProblem}
\begin{array}{lll}
 \sup_{u \in \mc{A}} &E^x[\int_0 ^T f(t,X(t),u(t)) dt + g(X(T))] \\[\smallskipamount]
\mbox{subject to} \\[\smallskipamount]
&dX(t) = b(t,X(t),u(t))dt + \sigma(t,X(t),u(t))dB(t) \\[\smallskipamount]
&\hspace{1.35cm} + \int_{\mb{R}} \gamma(t,X(t^-),u(t^-),z) \tilde{N}(dt,dz) \\[\smallskipamount]
&(i) \mbox{ } E^x[M(X(T))] = 0 \mbox{ or } (ii) \mbox{ } M(X(T)) = 0 \mbox{ a.s.},
\end{array}
\end{equation}
\noindent where $M : \mb{R} \rightarrow \mb{R}$ is some given continuous function, $\mc{U} \subset \mb{R}$ is a given set, $b: \mb{R}_+ \times \mb{R} \times \mc{U} \rightarrow \mb{R}$, $\sigma: \mb{R}_+ \times \mb{R} \times \mc{U} \rightarrow \mb{R}$ and $\gamma: \mb{R}_+ \times \mb{R} \times \mc{U} \times \mb{R} \rightarrow \mb{R}$ . Here, $E^x[\cdot]$ denotes the expectation given that the state process $(X(t))_{t \in [0,T]}$ starts in $x$, i.e. $X(0)=x$.

In problem~(\ref{eq: OpprProblem}), the stochastic process $u(t) = u(t,\omega): \mb{R}_+ \times \Omega \rightarrow \mc{U}$ is our control process. We say that this control process $u(t)$ is admissible, and write $u \in \mc{A}$ if the dynamics of $X$ (i.e., the SDE in problem~(\ref{eq: OpprProblem})) has a unique, strong solution for all $x \in \mb{R}$, and
\[
E^x[\int_0 ^T f(t,X(t),u(t)) dt + g(X(T))] < \infty.
\]
There is a sufficient stochastic maximum principle for jump diffusions by Framstad et al.~\cite{FOS} (see also Tang and Li~\cite{TangLi}), which can be used to find the optimal control of problem~(\ref{eq: OpprProblem}) without the additional constraints $(i)$ or $(ii)$. This maximum principle converts the simplified problem into the problem of maximizing a Hamiltonian function, and solving the so-called adjoint backward stochastic differential equation (BSDE). For reference purposes, this stochastic maximum principle, Theorem~\ref{thm: SufficientMaxPrinc}, is written out in Appendix~\ref{sec: Appendix}.

However, if we add a constraint such as $(i)$ or $(ii)$ to the problem, such as in (\ref{eq: OpprProblem}), the stochastic maximum principle cannot be used directly. We consider these two different kinds of constraints, and show how the constrained stochastic optimal control problems can be solved using a combination of a generalized Lagrange duality method and the stochastic maximum principle.

\begin{remark}
 \label{remark: Dimension}
For notational simplicity, problem~(\ref{eq: OpprProblem}) is assumed to be in one dimension. However, the results of this section also apply to multi-dimensional stochastic optimal control problems.
\end{remark}

\subsection{Constraint of type $(i)$}
\label{subsec: Expected}

Consider problem~(\ref{eq: OpprProblem}) with a type $(i)$ constraint, i.e.:
\[
\begin{array}{ll}
\phi(x) \mbox{ }:= \mbox{ } \sup_{u \in \mc{A}} \mbox{ } E^x[\int_0 ^T f(t,X(t),u(t)) dt + g(X(T))] \hspace{2.5cm}\\[\smallskipamount]
\mbox{subject to}
\end{array}
\]
\begin{equation}
\begin{array}{rllll}
 \label{eq: Problem1}
dX(t) &=&b(t,X(t),u(t))dt + \sigma(t,X(t),u(t))dB(t) \\[\smallskipamount]
&&+ \int_{\mb{R}} \gamma(t,X(t^-),u(t^-),z) \tilde{N}(dt,dz) \\[\smallskipamount]
E^x[M(X(T))] &=& 0.
\end{array}
\end{equation}

This problem can be solved using the standard Lagrange multiplier method, and then applying some method of stochastic control, for instance the stochastic maximum principle. Hence, let $\lambda \in \mb{R}$ be a Lagrange multiplier. Then, we introduce the unconstrained stochastic control problem
\begin{equation}
 \label{eq: Problem1Lagrange}
\begin{array}{llllll}
\phi_{\lambda}(x) \mbox{ } := \mbox{ } \sup_{u \in \mc{A}} \mbox{ } E^x[\int_0 ^T f(t,X(t),u(t)) dt + g(X(T)) + \lambda M(X(T))] \\[\smallskipamount]
\mbox{subject to} \\[\smallskipamount]
\hspace{1.6cm}dX(t) = b(t,X(t),u(t))dt + \sigma(t,X(t),u(t))dB(t) \\[\smallskipamount]
\hspace{3cm}+ \int_{\mb{R}} \gamma(t,X(t^-),u(t^-),z) \tilde{N}(dt,dz).
\end{array}
\end{equation}
This solution strategy is explored in Section 11.3 in {\O}ksendal~\cite{Oksendal} for the no-jump case. However, the proof of this theorem generalizes in a straight-forward manner to the L\'{e}vy case (i.e., where we may have jumps in the dynamics of the state process $X(t)$). Therefore, we have the following theorem.

\begin{theorem}
\label{thm: 1}
(Theorem 11.3.1 in {\O}ksendal~\cite{Oksendal}, adapted to the jump case)
Suppose that we for all $\lambda \in \mb{R}$ can find $\phi_{\lambda}(y)$ and $u_{\lambda}^*$ solving the unconstrained stochastic control problem~(\ref{eq: Problem1Lagrange}). Moreover, suppose there exists $\lambda_0 \in \mb{R}$ such that
\[
 E^x[M(X_{u_{\lambda_0}^*}(T))] =0.
\]
Then, $\phi(x) := \phi_{\lambda_0}(x)$ and $u^* := u^*_{\lambda_0}$ solves the constrained stochastic control problem~(\ref{eq: Problem1}).
\end{theorem}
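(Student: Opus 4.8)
The plan is to exploit the fact that adding a constant multiple of a constraint that vanishes at the optimum cannot change the objective value, combined with the elementary observation that the Lagrangian-augmented problem is an upper bound for the constrained problem. First I would note that since $u^*_{\lambda_0} \in \mc{A}$ and, by hypothesis, $E^x[M(X_{u^*_{\lambda_0}}(T))] = 0$, the control $u^*_{\lambda_0}$ is feasible for the constrained problem~(\ref{eq: Problem1}): it satisfies the state dynamics (these are the same in both problems) and the terminal constraint $E^x[M(X(T))]=0$. Hence the constrained supremum $\phi(x)$ is at least the value attained by $u^*_{\lambda_0}$ in the constrained problem, which, because the extra term $\lambda_0 M(X(T))$ has zero expectation along this control, equals $\phi_{\lambda_0}(x)$. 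This gives the inequality $\phi(x) \ge \phi_{\lambda_0}(x)$.

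For the reverse inequality, I would take any admissible $u \in \mc{A}$ that is \emph{feasible} for the constrained problem, i.e.\ satisfies $E^x[M(X_u(T))]=0$. For such a $u$,
\[
E^x\Big[\int_0^T f(t,X_u(t),u(t))\,dt + g(X_u(T))\Big]
= E^x\Big[\int_0^T f(t,X_u(t),u(t))\,dt + g(X_u(T)) + \lambda_0 M(X_u(T))\Big]
\le \phi_{\lambda_0}(x),
\]
where the equality uses $E^x[\lambda_0 M(X_u(T))] = \lambda_0 E^x[M(X_u(T))] = 0$ and the inequality is the definition of $\phi_{\lambda_0}(x)$ as the supremum over \emph{all} of $\mc{A}$ (a superset of the feasible controls). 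Taking the supremum over all feasible $u$ yields $\phi(x) \le \phi_{\lambda_0}(x)$. Combining the two inequalities gives $\phi(x) = \phi_{\lambda_0}(x)$, and since this common value is attained by $u^* = u^*_{\lambda_0}$ (which we have shown is feasible), $u^*$ is optimal for the constrained problem.

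The argument is essentially a weak-duality-plus-complementary-slackness computation, so there is no deep obstacle; the only points requiring a little care are (a) checking that $u^*_{\lambda_0}$ is genuinely admissible for the constrained problem — admissibility in the sense of $\mc{A}$ already guarantees existence of a unique strong solution to the SDE and integrability of the payoff, so this is immediate — and (b) making sure the terminal-constraint functional $M(X(T))$ is integrable enough for $E^x[\lambda_0 M(X_u(T))]$ to split as $\lambda_0 E^x[M(X_u(T))]$; this is where one would invoke the continuity of $M$ together with the standing moment conditions on $X$. The jump terms play no special role: the dynamics enter only through the (shared) SDE, so the proof is verbatim the one for the no-jump case in \O ksendal~\cite{Oksendal}.
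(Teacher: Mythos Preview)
Your argument is correct and is essentially the same weak-duality-plus-complementary-slackness computation the paper has in mind: the paper defers to {\O}ksendal's Theorem~11.3.1 and to its own proof of Theorem~\ref{thm: 2}, which proceeds exactly as you do (show $J^{u^*_{\lambda_0}}(x)\ge J^u(x)$ for every feasible $u$ via the Lagrangian, then note $u^*_{\lambda_0}$ is itself feasible). Your presentation, splitting into the two inequalities $\phi(x)\ge\phi_{\lambda_0}(x)$ and $\phi(x)\le\phi_{\lambda_0}(x)$, is a slightly cleaner packaging of the same idea.
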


\begin{proof}
 See {\O}ksendal~\cite{Oksendal}, Theorem 11.3.1 and also the proof of the following Theorem~\ref{thm: 2}.
\end{proof}

From Theorem~\ref{thm: 1}, it is sufficient to solve problem~(\ref{eq: Problem1Lagrange}) in order to solve problem~(\ref{eq: OpprProblem}). Problem~(\ref{eq: Problem1Lagrange}) can be solved using the stochastic maximum principle (see Theorem~\ref{thm: SufficientMaxPrinc} in the Appendix).

\subsection{Constraint of type $(ii)$}
\label{subsec: StokLagrLik}

Now, consider problem~(\ref{eq: OpprProblem}) with a type $(ii)$ constraint. That is, consider the stochastic optimal control problem

\[
\begin{array}{lll}
\phi(x) \mbox{ } := \mbox{ } \sup_{u \in \mc{A}} \mbox{ } E^x[\int_0 ^T f(t,X(t),u(t)) dt + g(X(T))] \hspace{2.5cm} \\[\smallskipamount]
\mbox{subject to} \\[\smallskipamount]
\end{array}
\]
\begin{equation}
 \label{eq: Problem2}
\begin{array}{lrllll}
&dX(t) &=&b(t,X(t),u(t))dt + \sigma(t,X(t),u(t))dB(t) \\[\smallskipamount]
&&&+ \int_{\mb{R}} \gamma(t,X(t^-),u(t^-),z) \tilde{N}(dt,dz) \\[\smallskipamount]
&M(X(T)) &=& 0 \mbox{ a.s.}
\end{array}
\end{equation}
\noindent where, as before, $M : \mb{R} \rightarrow \mb{R}$ is a given, continuous function. For notational simplicity, define
\[
J^u(x) := E^x[\int_0 ^T f(t,X(t),u(t)) dt + g(X(T))].
\]
We would like to use the Lagrange multiplier concept to solve problem~(\ref{eq: Problem2}) by solving an unconstrained stochastic control problem. However, since we have an almost sure constraint, it is not sufficient to introduce a single scalar Lagrange multiplier $\lambda \in \mb{R}$. The Lagrange multiplier must be stochastic in order to handle the stochastic constraint $M(X(T)) = 0 \mbox{ a.s.}$ Hence, we introduce an $\mc{F}_T$-measurable stochastic Lagrange multiplier $\mu : \Omega \rightarrow \mb{R}$ (which we will also call a \emph{stochastic multiplier}). Note that $\mu$ must be $\mc{F}_T$-measurable, since $M(X(T))$ is $\mc{F}_T$-measurable.

Assume that the stochastic multipler $\mu$ satisfies $E[\mu] < \infty$. Moreover, assume that $E^x[M(X_u(T)] < \infty$ for all $u \in \mc{A}$. We introduce a new, but related stochastic control problem
\begin{samepage}
\[
\begin{array}{ll}
\phi_{\mu}(x) \mbox{ } := \mbox{ }  \sup_{u \in \mc{A}} \mbox{ } E^x[\int_0 ^T f(t,X(t),u(t)) dt + g(X(T)) + \mu M(X(T))] \hspace{1cm} \\[\medskipamount]
\mbox{subject to}
\end{array}
\]
\begin{equation}
 \label{eq: LagrangeProblem2}
\begin{array}{lrllll}
dX(t) &=&b(t,X(t),u(t))dt + \sigma(t,X(t),u(t))dB(t) \\[\smallskipamount]
&&+ \int_{\mb{R}} \gamma(t,X(t^-),u(t^-),z) \tilde{N}(dt,dz),
\end{array}
\end{equation}
\end{samepage}

\noindent and define
\[
J^u_{\mu}(x) := E^x[\int_0 ^T f(t,X(t),u(t)) dt + g(X(T)) + \mu M(X(T))].
\]

We also define the set of stochastic multipliers by

\[
 \Lambda := \{ \mu : \Omega \rightarrow \mb{R}\mbox{ } | \mbox{ } \mu \mbox{ is } \mc{F}_T\mbox{-measurable and } E[\mu] < \infty\}.
\]

Now, we will prove Theorem~\ref{thm: 2}, which says that if we can find a solution to the unconstrained problem~\eqref{eq: LagrangeProblem2} with a stochastic multiplier which ensures that the constraint $M(X(T)) = 0$ a.s. is satisfied, then we have a corresponding solution to our original problem~\eqref{eq: Problem2}.

\begin{theorem}
 \label{thm: 2}
Suppose that we for all $\mu \in \Lambda$ can find $\phi_{\mu}(x)$ and $u_{\mu}^*$ solving the unconstrained stochastic control problem~(\ref{eq: LagrangeProblem2}). Moreover, suppose there exists $\mu_0 \in \Lambda$ such that
\[
M(X_{u_{\mu_0}^*}(T)) =0 \mbox{ a.s.}
\]
Then, $\phi(x) := \phi_{\mu_0}(x)$ and $u^* := u^*_{\mu_0}$ solves the constrained stochastic control problem~(\ref{eq: Problem2}).

\end{theorem}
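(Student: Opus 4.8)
The plan is to run the stochastic analogue of the classical Lagrangian-sufficiency argument (the same one behind Theorem~\ref{thm: 1} and Theorem 11.3.1 in {\O}ksendal~\cite{Oksendal}): exhibit $u_{\mu_0}^*$ as a feasible control for the constrained problem~\eqref{eq: Problem2}, and then use a weak-duality inequality to show that no feasible control can beat it.

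First I would note that $u_{\mu_0}^*$ is admissible for~\eqref{eq: Problem2}: it lies in $\mc{A}$ because it solves~\eqref{eq: LagrangeProblem2}, whose supremum is taken over $\mc{A}$, and it satisfies $M(X_{u_{\mu_0}^*}(T)) = 0$ a.s.\ by hypothesis. The key observation is then that, for \emph{any} $u \in \mc{A}$ which also satisfies the constraint $M(X_u(T)) = 0$ a.s., the extra term in the relaxed objective vanishes almost surely, so
\[
J^u_{\mu_0}(x) = E^x\Big[\int_0^T f(t,X(t),u(t))\,dt + g(X(T)) + \mu_0 M(X_u(T))\Big] = J^u(x),
\]
and in particular $J^{u_{\mu_0}^*}_{\mu_0}(x) = J^{u_{\mu_0}^*}(x)$.

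Combining this identity with the optimality of $u_{\mu_0}^*$ for the relaxed problem~\eqref{eq: LagrangeProblem2}, for every feasible $u$ we obtain
\[
J^u(x) = J^u_{\mu_0}(x) \le \phi_{\mu_0}(x) = J^{u_{\mu_0}^*}_{\mu_0}(x) = J^{u_{\mu_0}^*}(x).
\]
Hence $u^* := u_{\mu_0}^*$ attains the supremum in~\eqref{eq: Problem2} over the feasible set, and $\phi(x) = J^{u_{\mu_0}^*}(x) = \phi_{\mu_0}(x)$, which is the assertion of the theorem.

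There is no deep obstacle here: the proof is essentially a one-line weak-duality estimate. The points that require a little care are (a) checking that admissibility for~\eqref{eq: LagrangeProblem2} transfers to admissibility for~\eqref{eq: Problem2}, which it does since the control set $\mc{A}$ is unchanged and the almost-sure constraint is precisely what the choice of $\mu_0$ enforces; and (b) the well-definedness of the functional $J^u_\mu$ in general, which is why the multiplier class $\Lambda$ carries the moment condition $E[\mu] < \infty$ (for a \emph{feasible} $u$, however, $\mu_0 M(X_u(T))$ is simply the zero random variable, so no further integrability is needed in the estimate above). The conceptual point — that the multiplier has to be $\mc{F}_T$-measurable rather than a scalar, because the constraint is pathwise rather than in expectation — has already been argued before the statement; the proof only has to verify that such a multiplier indeed does the job.
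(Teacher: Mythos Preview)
Your proof is correct and follows essentially the same approach as the paper: both arguments show that $u_{\mu_0}^*$ is feasible for~\eqref{eq: Problem2}, observe that for any feasible $u$ the penalty term $\mu_0 M(X_u(T))$ vanishes almost surely so that $J^u_{\mu_0}(x)=J^u(x)$, and then invoke the optimality of $u_{\mu_0}^*$ in the relaxed problem~\eqref{eq: LagrangeProblem2} to conclude $J^u(x)\le J^{u_{\mu_0}^*}(x)$. Your presentation is in fact a bit cleaner, and your side remarks on admissibility and integrability make explicit points the paper leaves implicit.
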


\begin{proof}
Let $\mu$ be $\mc{F}_T$-measurable. Then,

\[
\begin{array}{lllll}
 E^x[\int_0^T f(t,u_{\mu}^*, X_{u_{\mu}^*})dt + g(X_{u_{\mu}^*}(T)) + \mu M(X_{u_{\mu}^*}(T))] = J_{\mu}^{u_{\mu}^*}(x) \\[\smallskipamount]
\geq J^{u}_{\mu}(x) = E^x[\int_0^T f(t,u, X_{u})dt + g(X_{u}(T)) + \mu M(X_{u}(T))]
\end{array}
\]
\noindent where the first equality uses the definition of $J^u_\mu$, the second uses the definition of $u_{\mu}^*$ and the final equality uses the definition of $J^u_\mu$.

In particular, if $\mu = \mu_0$ a.s. and $u$ is feasible in the constrained control problem~(\ref{eq: Problem2}), then

\begin{equation}
\label{eq: boks}
M(X_{u_{\mu_0}^*}(T))=0=M(X_{u}(T)) \mbox{ a.s.}
\end{equation}
\noindent from the definition of $\mu_0$ and the assumption that $u$ is feasible in problem~(\ref{eq: Problem2}).

Hence,
\[
\begin{array}{lll}
J^{u^*_{\mu_0}}_{\mu_0}(x) = E^x[\int_0^T f(t,u_{\mu_0}^*, X_{u_{\mu_0}^*})dt + g(X_{u_{\mu_0}^*}(T)) + \mu_0 M(X_{u_{\mu_0}^*}(T))] \\[\smallskipamount]
\geq J^{u}_{\mu_0}(x) =  E^x[\int_0^T f(t,u, X_{u})dt + g(X_{u}(T)) + \mu_0 M(X_{u}(T))]
\end{array}
\]

But $M(X_{u_{\mu_0}^*}(T))=0=M(X_{u}(T)) \mbox{ a.s.}$ from equation~(\ref{eq: boks}), so

\[
J^{u^*_{\mu_0}}(x) = J^{u^*_{\mu_0}}_{\mu_0}(x) \geq J^{u}_{\mu_0}(x) = J^{u}(x)
\]
\noindent for all stochastic controls $u$ feasible in the constrained problem~(\ref{eq: Problem2}). Note that $u^*_{\mu_0}$ is feasible in problem~(\ref{eq: Problem2}), therefore it is an optimal control for this problem.
\end{proof}

\medskip

Note that problem~(\ref{eq: LagrangeProblem2}) is a stochastic optimal control problem of the form in {\O}ksendal and Sulem~\cite{OS-Levy}, with $f_{\mu}=f$ and $g_{\mu}(x)=g(x) + \mu M(x)$. Therefore, we may use some known methods of stochastic control, for example the stochastic maximum principle, to solve the problem. Clearly, $f_{\mu}$ and $g_{\mu}$ are continuous functions. However, in order to use the stochastic maximum principle, we also need $g_{\mu}(x)$ to be a concave function. If this is the case, we can solve this problem using the maximum principle for jump processes, Theorem~\ref{thm: SufficientMaxPrinc}.

It is irrelevant for this solution strategy whether the unconstrained stochastic control problem coming from the stochastic Lagrange multiplier method is solved using the maximum principle (Theorem~\ref{thm: SufficientMaxPrinc}), or some other method of stochastic control. If it is more suitable for the problem, the dynamic programming/Hamilton-Jacobi-Bellman approach to stochastic control of jump diffusions can also be used, see {\O}ksendal and Sulem~\cite{OS-Levy} Theorem 3.1. For the dynamic programming approach, the concavity of the function $g_{\mu} = g + \mu M$ is not necessary. However, the problem must have a Markovian structure.

\begin{remark}

Note that the only thing that distinguishes problem~(\ref{eq: LagrangeProblem2}) from an unconstrained version of problem~(\ref{eq: Problem2}) (i.e. without the constraint $M(X(T))=0$ a.s.) is the $g$-function in the objective function. Hence, when we apply the stochastic maximum principle, Theorem~\ref{thm: SufficientMaxPrinc}, the Hamiltonian is equal for these two problems. Therefore, also the control maximizing the Hamiltonian will be equal. The only difference is the terminal condition of the BSDE for the adjoint processes $p, q, r$, equation~(\ref{eq: AdjointJump}). However, this altered terminal condition clearly affects the solution of the adjoint BSDE, and hence can also affect the optimal control process.

\end{remark}

\begin{remark}
It is important that the constraint of the stochastic optimal control problem depends on $X(T)$, i.e. the state process at the \emph{terminal time}. If we had a constraint of the type $M(X(\tilde{t})) = 0$ a.s., where $\tilde{t}<T$, then the new stochastic control problem coming from the Lagrange multiplier method would not fit the setting of {\O}ksendal and Sulem~\cite{OS-Levy}. This complicates matters significantly.
\end{remark}

\section{The economic model: Optimal consumption with L{\'e}vy wage}
\label{sec: Model}

In this section, we derive an economic model which we will use to analyze wage- and inflation risk in an optimal consumption problem with a terminal constraint. We also give interpretations of the two different types of terminal constraints $(i)$ and $(ii)$ (see Section~\ref{sec: StochLagrange}). In the following Section~\ref{sec: Solution}, we will apply the stochastic multiplier method of Section~\ref{sec: StochLagrange} in order to solve this optimal consumption problem. 

As before, let $(\Omega, \mc{F}, P)$ be a probability space, where $\Omega$ is the scenario space, $\mc{F}$ a respective $\sigma$-algebra and $P$ the probability measure. Consider an agent who is planning for times $t \in [a,T+a]$, $a \geq 0$. Again, let $\{B(t,\omega)\}_{t \in [a,T+a]}$ be a Brownian motion, and $\int_{\mb{R}} \gamma_i(t, z,\omega) {N_i}(dt,dz), i=1,2$, two pure jump processes independent of $B(t)$ and each other (we will often abbreviate by omitting $\omega$ in the notation). Let $\{\mc{F}_t\}_{t \in [a,T+a]}$ be the filtration generated by the Brownian motion and the pure jump processes, including all null sets. In the following, by adapted processes, we mean adapted with respect to this filtration.

An agent in this market receives an exogenous nominal wage, chooses a consumption function and has the possibility to save or borrow (i.e., go long or short) in an asset with risk free nominal payoff. For a specific agent, the starting time of planning is $a\geq 0$, $W_{n}(t,\omega)$ is the nominal wage rate at time $t$ and $X_{n}(t,\omega)$ is the nominal level of savings at time $t$. In this economy, $\xi(t,\omega)$ is the time $t$ price of the consumption good, and $S (t)$ is the time $t$ price level of the risk free asset. Our goal is to study the effect of inflation- and wage risk. Hence, to avoid unnecessary complication, we do not include any risky assets. For more about the economic terms, see Romer~\cite{Romer}.

The inflation, denoted $\pi (t,\omega)$, can be decomposed into a drift term $\hat{\pi}$, and the deviation from that level. The deviation from $\hat{\pi}$ is stochastic, and modeled by $\Delta(t,\omega)dt=\tilde{\pi}dB(t)$ ($\tilde{\pi}$ is constant), i.e., $\pi (t,\omega)=\hat{\pi}+\Delta(t,\omega)$. Since inflation is defined by the identity
\[d\xi(t,\omega):=\xi(t,\omega)\pi (t,\omega)dt,\]
the price $\xi$ consequently is a geometric Brownian motion. The market is normalized by setting $\xi (0):=1$.

The changes in the nominal wage rate is a pure jump process
\[dW_n (t) =\xi(t)\int_{\mb{R}} z N_{1}(dt,dz)-(1-\epsilon) W_n(t)\int_{\mb{R}}N_{2}(dt,dz). \]
Here, $N_i(dt,dz)$, $i=1,2$, represent two independent Poisson random measures, $z>0$ is the size of each jump, which will always be positive, and $\epsilon \in (0,1)$ is a constant. Thus, we model the nominal wage process such that an agent receives positive wage gains, and bears the risk of losing a portion of wage (for instance by becoming unemployed, but receiving a welfare benefit). The wage gain pressure is proportional to the consumption price, since an agent cares about the real wage, rather than the nominal wage. Using the L\'{e}vy-Kintchine representation, these terms are decomposed into martingale and non-martingale expressions. In order to do this decomposition, we assume that $\int_{|z| \geq 1} |z| \nu_i(dz) < \infty$, $i=1,2$ (alternatively, the stronger assumption of finite variance of the two L\'{e}vy processes). Thus,
\[
\begin{array}{lll}
\xi(t) \int_{\mb{R}} z {N_1}(dt,dz) &=& \xi(t)\int_{\mb{R}} z (E[N_1(dt,dz)]+\tilde{N_1}(dt,dz)) \\[\smallskipamount]
&=& \xi(t)\left(\int_{\mb{R}} z \nu_1 (dz)dt+\int_{\mb{R}} z \tilde{N_1}(dt,dz)\right) \\[\smallskipamount]
&=& \xi(t)\left(\alpha dt+\int_{\mb{R}} z \tilde{N_1}(dt,dz)\right),
\end{array}
\]
\noindent where $\alpha := \int_{\mb{R}} z \nu_1 (dz)$. Similarly
\[(1-\epsilon)W_n(t)\int_{\mb{R}} {N_2}(dt,dz):=\beta(1-\epsilon)W_n(t) dt+(1-\epsilon)W_n(t)\int_{\mb{R}} \tilde{N_2}(dt,dz).\]
Here, $\beta:=\int_{\mb{R}}\nu_2(dz)$. The terms
\[
\tilde{N_i}(dt, dz) := N_i(dt, dz) - \nu_i(dz)dt,\; i=1,2,
\]
are the compensated Poisson random measures, and $\nu_i(dz)$, $i= 1,2$, are the L\'{e}vy measures. For more on these measures and L\'{e}vy processes in general, see {\O}ksendal and Sulem~\cite{OS-Levy}.

We let the consumption good (with price $\xi(t)$) be the num\'{e}raire for all $ \mbox{ } t \in [a,T+a]$, and define the real wage process by
\[ W(t,\omega):=\frac{W_{n}(t,\omega)}{\xi(t,\omega)}.\]

Then, $P$-a.e.
\[ dW(t,\omega)=d\left(\frac{W_{n}(t,\omega)}{\xi(t,\omega)}\right)=\frac{dW_{n}(t,\omega)}{\xi(t,\omega)}-\frac{W_{n}(t,\omega)d\xi(t,\omega)}{[\xi(t,\omega)]^{2}}.\]

Inserting the relevant terms, we get

\[
\begin{array}{lll}
dW(t) &=& (\alpha - [\hat{\pi}+\beta (1-\epsilon)] W(t))dt - \tilde{\pi}W(t)dB(t)+\int_{\mb{R}} z \tilde{N}_{1}(dt,dz)\\[\smallskipamount]
&&-(1-\epsilon) W(t)\int_{\mb{R}}\tilde{N}_{2}(dt,dz).
\end{array}
\]

Thus, the real wage is a process with both jumps and continuous movements due to the inflation. Figure \ref{fig: Figure1} is an illustration of such a process.

\begin{figure}
\centering
\includegraphics[scale=.6]{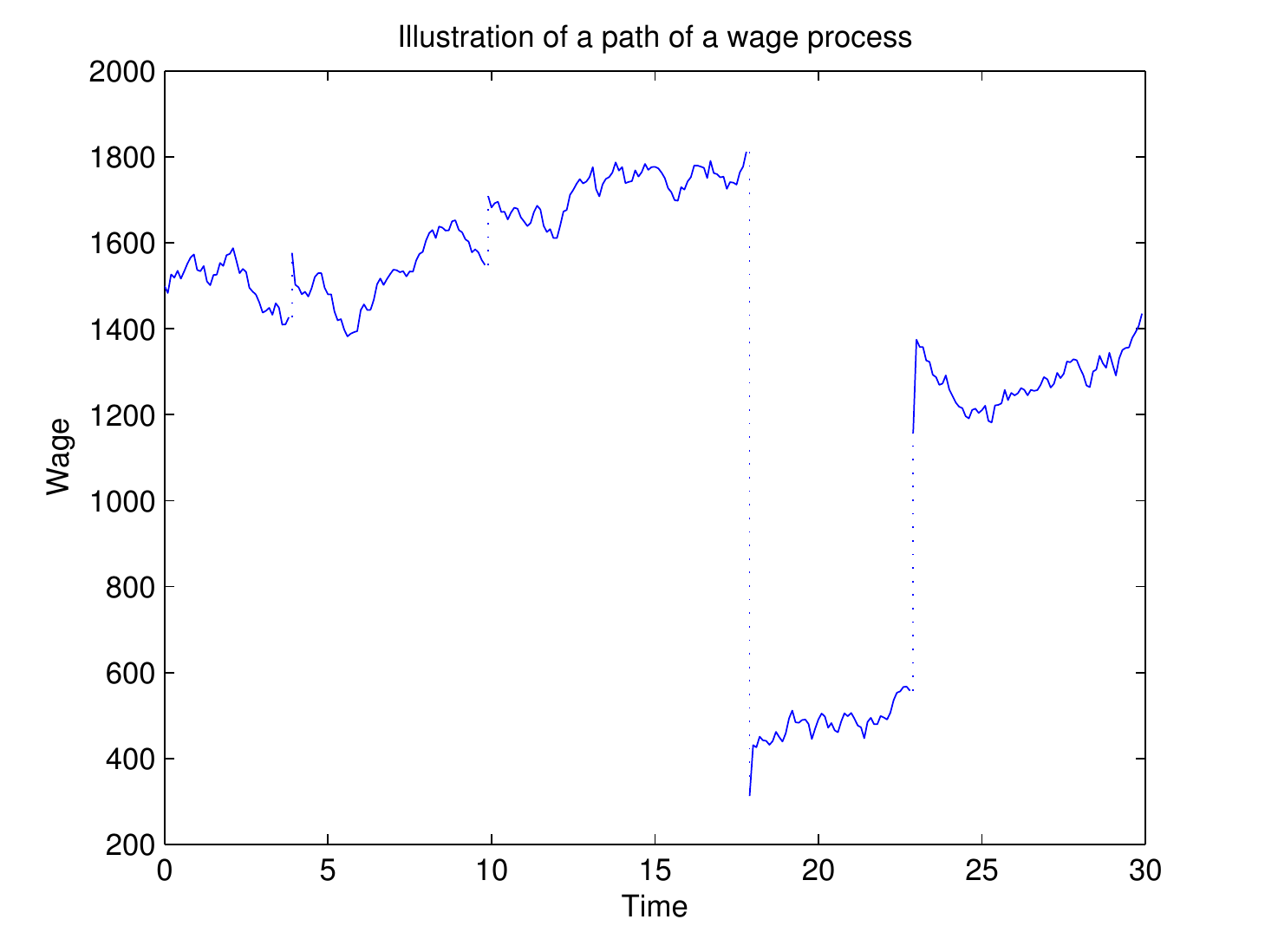}
\caption{A path of a L{\'e}vy wage process.}
\label{fig: Figure1}
\end{figure}

The price of the asset is given by
\[dS(t)=r_nS(t)dt,\]
where the interest rate $r_n$ is a constant. Since this is the only financial object of this market, nominal savings is given by $X_n(t)=\eta S(t)$, where $\eta$ is the number of assets held by the agent. Nominal savings is assumed to be $\Gamma$-financing, meaning that $dX_n(t)=(W_n(t)-\xi(t)c(t))dt +r_n X_n(t)dt$ (this corresponds to the budget constraint), where $\{c(t)\}_{t \in [a,T+a]}$ is an adapted stochastic process representing the real consumption rate of the agent at time $t$.

Define the real value of savings
\[ X(t,\omega):=\frac{X_{n}(t,\omega)}{\xi(t,\omega)}.\]
Written in differential form (using It{\^o}'s formula), we get
\[
\begin{array}{lll}
 dX(t,\omega)=d\left(\frac{X_{n}(t,\omega)}{\xi(t,\omega)}\right)=\frac{dX_{n}(t,\omega)}{\xi(t,\omega)}-X(t,\omega)\pi(t,\omega) \\[\medskipamount]
\hspace{2cm} =\frac{r_{n}X_{n}(t,\omega)dt+W_{n}(t,\omega)dt-\xi(t,\omega)c(t)dt}{\xi(t,\omega)}-X(t,\omega)\pi(t,\omega)dt.\\
\end{array}
\]

Thus, the real value of savings is

\[dX(t,\omega)= (r_{n}-\hat{\pi})X(t)dt - \tilde{\pi}X(t)dB(t)+ W(t)dt - c(t)dt.\]


Given the market situation just described, an agent planning consumption and saving at a given time $a\geq 0$ would like to solve the following stochastic optimal control problem:

\begin{equation}
\begin{array}{lllll}
\label{eq: Agent}
 \sup_{\{c(t) \geq 0 \mbox{ } \forall \mbox{ } t \mbox{ } a.s.\}} \hspace{1cm} E[\int_a^{T+a} e^{-\delta (t-a)} u(c(t)) dt] \hspace{3cm}
 \\[\smallskipamount]
\mbox{subject to}
\end{array}
\end{equation}
\[
\begin{array}{rll}
dX(t) &=& (r_{n}-\hat{\pi})X(t)dt - \tilde{\pi}X(t)dB(t)+ W(t)dt - c(t)dt, \\[\smallskipamount]
dW(t) &=& (\alpha - [\hat{\pi}+\beta (1-\epsilon)] W(t))dt - \tilde{\pi}W(t)dB(t) \\[\smallskipamount]
&&+\int_{\mb{R}} z \tilde{N}_{1}(dt,dz)-(1-\epsilon) W(t)\int_{\mb{R}}\tilde{N}_{2}(dt,dz)\\[\smallskipamount]
X(a) &=& x_a  , \mbox{ } W(a) = w_a \\[\smallskipamount]
\mbox{(i) }E[X(T+a)] \mbox{ }&\geq& \mbox{ } K, \;\;\;\;\;\;\;  or \;\;\;\;\;\;\; \mbox{(ii) }  X(T+a) \geq K \mbox{ a.s.},
\end{array}
\]
\noindent where $K \leq 0$ is a given constant, and either condition (i) or condition (ii) holds. We call this problem (OCP) (optimal consumption problem).

 In problem ~\ref{eq: Agent}, the function $u(\cdot): \mb{R}_+ \mapsto \mb{R}_+$ is the utility function of the agent, which we later will assume is of CRRA form, i.e. $u(c) = \frac{c^\gamma}{\gamma}$, where $\gamma < 1$. Furthermore, $\delta>0$ is the agent's time preference discount factor and, as mentioned, $\{X(t)\}_{t\in [a,T+a]}$ is the stochastic saving process for the time $t$ amount of real wealth placed in the bank. Moreover, $\{W(t)\}_{t \in [a,T+a]}$ is the stochastic wage process for the time $t$ real wage level. We assume that $X(a)=x_a>0$ and $W(a)=w_a\geq 0$. These two initial levels are both exogenous. Note also that the wage process $W$ is given exogenously, while the agent can control the process $X$.

\medskip

\begin{remark}
 \label{remark: M}
Note that the constraints in problem~(\ref{eq: Agent}) correspond to choosing $M(x) = x - K$ in the notation of Section~\ref{sec: StochLagrange}, where $K$ is a constant.
\end{remark}

\medskip

Regarding the two versions of the terminal condition, condition (i) is very mild for the agent. In this case the bank takes on all the market risk. On the other hand, condition (ii) is a strict constraint from the agent's point of view, since it says that the agent must end up with greater than or equal $K$ at the final time a.s. When considering this constraint, the agent bears all market risk. Reality is most likely somewhere in-between the two extremes (i) and (ii). In general, one could introduce a risk sharing parameter $\Lambda \in [0,1]$, and introduce a constraint of the form $\Lambda E[X(T+a)] + (1-\Lambda) X(T+a) \geq K$ a.s. Such a parameter $\Lambda$ characterizes the power of the banks in the market. Note that $\Lambda=1$ is constraint (i), while $\Lambda=0$ gives constraint (ii). Hence, the smaller $\Lambda$ is, the greater is the power of the banks in the market. See Figure~\ref{fig: BankPower} for an illustration.
%
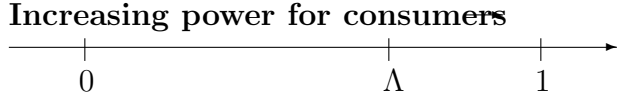
\begin{figure}
\setlength{\unitlength}{1cm}
\begin{picture}(5,2.6)
\put(1,1){\vector(1,0){8}} 
\put(2,0.82){\line(0,1){0.3}} 
\put(1.92,0.42){$0$} 
\put(8,0.82){\line(0,1){0.3}} 
\put(7.92,0.42){$1$} 
\put(6,0.82){\line(0,1){0.3}} 
\put(5.92,0.42){$\Lambda$} 
\put(1,1.3){\textbf{Increasing power for consumers}} 
\put(7, 1.43){\vector(1,0){0.5}} 
\end{picture}
\caption{Illustration of the two constraints}
\label{fig: BankPower}
\end{figure}

The objective is now to solve the stochastic optimal control problem~(\ref{eq: Agent}), i.e. find an expression for the optimal consumption process given the dynamics and the terminal condition on savings.

\section{Stochastic multiplier approach and (OCP)}
\label{sec: Solution}



In this section, we return to our original problem~(\ref{eq: Agent}) with constraint $(i)$. This problem will be solved using the techniques of Section~\ref{subsec: Expected}. 

Problem~(\ref{eq: Agent}) with constraint $(i)$ is equivalent to the problem where the constraint is binding, i.e. $E[X(T+a)] = K$. To prove this, assume we have an optimal control $\tilde{c}$ for problem~(\ref{eq: Agent}) where $E[X_{\tilde{c}}(T+a)] > K$. We will show that there exists an $\epsilon > 0$ such that $c := \tilde{c} + \epsilon$ satisfies $E[X_{c}(T+a)] = K$ (i.e. it is a feasible control in problem~(\ref{eq: Agent})). By Lemma~\ref{lemma: SolutionXFinal2} in the Appendix and the definition of $c$,

\[
 X_c(t) = X_{\tilde{c}}(t) - \epsilon F,
\]

\noindent where $F$ is a known, positive random variable composed of exponentials and depending only on the parameters of the model, see equation~(\ref{eq: SDE-Lemma-Solution2}). Hence,

\[
 E[X_{c}(T+a)] = E[X_{\tilde{c}}(T+a)] - \epsilon E[F].
\]

Let $A := E[X_{\tilde{c}}(T+a)] - K > 0$ (by assumption). Then, $ E[X_{c}(T+a)] -K = A - \epsilon E[F]$. We would like to choose $\epsilon$ such that $ E[X_{c}(T+a)] -K = 0$, that is $A - \epsilon E[F] = 0$, i.e.

\[
 \epsilon = \frac{A}{E[F]} > 0 \mbox{ (positive since $E[F] > 0$)}.
\]

Thus, the consumption process $c = \tilde{c} + \epsilon$ is feasible in problem~(\ref{eq: Agent}). By the definition of $c$, $J(c) > J(\tilde{c})$, so since $c$ is feasible, $\tilde{c}$ cannot have been an optimal control. 

Thus, the problem becomes

\[
\begin{array}{lrlll}
\label{eq: AgentSlutt2}
 \sup_{\{c(t) \geq 0 \mbox{ } \forall \mbox{ } t \mbox{ } a.s.\}} &&E[\int_a^{T+a} e^{-\delta (t-a)} u(c(t)) dt] \hspace{4.5cm}
 \\[\smallskipamount]
\mbox{subject to}
\end{array}
\]
\[
\begin{array}{rlll}
dX(t) &=& (r_{n}-\hat{\pi})X(t)dt - \tilde{\pi}X(t)dB(t)+ W(t)dt - c(t)dt \\[\smallskipamount]
dW(t) &=& (\alpha - [\hat{\pi}+\beta (1-\epsilon)] W(t))dt - \tilde{\pi}W(t)dB(t) \\[\smallskipamount]
&&+\int_{\mb{R}} z \tilde{N}_{1}(dt,dz)-(1-\epsilon) W(t)\int_{\mb{R}}\tilde{N}_{2}(dt,dz)\\[\smallskipamount]
E[X(T+a)]&=&K, \mbox{ } X(a)=x_a, \mbox{ } W(a)=w_a.
\end{array}
\]

\smallskip

We rewrite this problem as an unconstrained two-dimensional stochastic control problem using the stochastic multiplier method of Section~\ref{subsec: Expected},

\[
 \begin{array}{lll}
\sup_{\{c(t) \geq 0 \mbox{ } \forall \mbox{ } t \mbox{ } a.s.\}}  \hspace{1cm} E[\int_a^{T+a} e^{-\delta (t-a)} u(c(t)) dt + \lambda (X(T+a)-K)] \hspace{1cm} \\[\medskipamount]
\mbox{subject to} \\[\smallskipamount]
\end{array}
\]
\begin{equation}
\label{eq: FinalProblem2}
\begin{array}{llllll}
dY(t) &=&
\left[ \begin{array}{ccc} (r_n - \hat{\pi})X(t) + W(t) - c(t) \\
\alpha -[\hat{\pi}+\beta (1-\epsilon)]W(t) \end{array} \right] dt
+ \left[ \begin{array}{ccc} -\tilde{\pi}X(t)
\\ -\tilde{\pi} W(t) \end{array} \right] dB(t) \\[\medskipamount]
&&+ \int_{\mb{R}} \left[ \begin{array}{ccc} 0 & 0
\\z & -(1-\epsilon)W(t) \end{array} \right]
\left[ \begin{array}{ccc} \tilde{N}_{1}(dt,dz)\\
\tilde{N}_{2}(dt,dz) \end{array} \right] \\[\medskipamount]
Y(a) &=& \left[ \begin{array}{ccc} x_a \\ w_a \end{array} \right]
\end{array}
\end{equation}
\smallskip

\noindent where $Y(t):= \left(X(t), W(t)\right)^T$, and $\lambda \in \mb{R}$ is a Lagrange multiplier introduced to handle the constraint $E[X(T+a)]=K$.

We solve problem~(\ref{eq: FinalProblem2}) using the stochastic maximum principle for jump diffusions from {\O}ksendal and Sulem~\cite{OS-Levy}, Theorem~\ref{thm: SufficientMaxPrinc}. Note that this theorem is easily generalized to our setting, where we start at time $t=a$ instead of $t=0$. We apply a subjective current value version of the Hamiltonian function, i.e. $\tilde{H}=H e^{\delta(t-a)}$. All the corresponding adjoint functions will be marked with a tilde to emphasize this change.

In this case, the subjective current value Hamiltonian function  is
\[
\begin{array}{llll}
 \tilde{H}(t,y,c,\tilde{p},\tilde{q},\tilde{r}) &=u(c) + \tilde{p}_1(\{r_n - \hat{\pi}\}x + w - c) + \tilde{p}_2(\alpha - [\hat{\pi}+\beta (1-\epsilon)]w) \\[\smallskipamount]
&- \tilde{q}_1 \tilde{\pi}x - \tilde{q}_2 \tilde{\pi}w + \int_{\mb{R}} z \tilde{r}_{2 1}(t,z) \nu_{1}(dz_{1})-\int_{\mb{R}}(1-\epsilon)w\tilde{r}_{22}(t,z)\nu_{2}(dz_{2})
\end{array}
\]
\noindent where  the adjoint processes $\tilde{p}(t):=(\tilde{p}_1(t),\tilde{p}_2(t))^{T}$, $\tilde{q}(t):=(\tilde{q}_1(t),\tilde{q}_2(t))^{T}$ and $\tilde{r} \in \mb{R}^{2 \times 2}$ is the matrix with components $\tilde{r}_{i j}(t,z)$, $i,j \in \{1,2\}$ for $t \in [a,T+a]$, $z \in \mb{R}$.

The set of adjoint backward stochastic differential equations (BSDEs) corresponding to this Hamiltonian is

\begin{equation}
\label{eq: AdjointFinal2}
\begin{array}{rlll}
d\tilde{p}_1(t) &=& -\tilde{p}_1(t)(r_n - \hat{\pi} - \delta)dt + \tilde{q}_1(t)\tilde{\pi}dt + \tilde{q}_1(t)dB(t) \\[\smallskipamount]
&&+  \int_{\mb{R}} \tilde{r}_{1 1}(t,z_1)\tilde{N}_1(dt,dz_1) + \int_{\mb{R}} \tilde{r}_{1 2}(t,z_2)\tilde{N}_2(dt,dz_2) \\[\medskipamount]
\tilde{p}_1(T+a) &=& \tilde{\lambda}
\end{array}
\end{equation}
\[
\begin{array}{rlll}
d\tilde{p}_2(t) &=& \{-\tilde{p}_1(t) + [\hat{\pi}+\beta (1-\epsilon) + \delta]\tilde{p}_2(t) \\[\smallskipamount]
&&+ \tilde{q}_2(t)\tilde{\pi} + \int_{\mb{R}}(1-\epsilon)\tilde{r}_{22}\nu_{2}(dz_2)\} dt + \tilde{q}_2(t)dB(t) \\[\smallskipamount]
&& + \int_{\mb{R}} \tilde{r}_{2 1}(t,z_1)\tilde{N}_1(dt,dz_1) + \int_{\mb{R}} \tilde{r}_{2 2}(t,z_2)\tilde{N}_2(dt,dz_2)\\[\medskipamount]
\tilde{p}_2(T+a) &=& 0.
 \end{array}
\]

\noindent The first order condition for the maximization of the Hamiltonian is:

\[
\label{eq: OptConsumFinal2}
\tilde{p_1}(t)=u'(\hat{c}(t))=\hat{c}^{(\gamma -1)}(t),
\]

\noindent where the final equality holds for CRRA utility.

To determine the optimal consumption, it suffices to find the adjoint process $\tilde{p}_1(t)$. This process is possibly stochastic due to the randomness in wage and inflation. Now, since $\tilde{\lambda}$ is constant, $\tilde{p}_1(t)$ has a deterministic end value.  Note that $\tilde{p}_1(t) = \tilde{\lambda} \exp(\{r_n - \hat{\pi} - \delta\}(T+a -t))$, $\tilde{q}_1(t) =0$, $\tilde{r}_{1 1}(t,z) = \tilde{r}_{1 2}(t,z) = 0$ for $t \in [a,T+a]$, $z \in \mb{R}$ is the solution of equation~(\ref{eq: AdjointFinal2}). So, $\tilde{p}_1(t)$ is deterministic. Thus, equivalently to equation~(\ref{eq: OptConsumFinal2}), we have

\begin{equation}
\label{eq: FOClambda2}
\tilde{\lambda}e^{(r_n - \hat{\pi}-\delta)(T+a-t)}=\tilde{p}_1(t)=u'(\hat{c}(t))=\hat{c}^{(\gamma -1)}(t).
\end{equation}

To determine the Lagrange multiplier $\tilde{\lambda}^*$ such that $E[X_{\hat{c}}(T)] = K$, we solve the stochastic differential equation (SDE) for the savings process $X(t)$:
\begin{equation}
\begin{array}{lll}
 \label{eq: SDE-X-Final2}
dX(t) &= \big( (W(t) - \hat{c}(t)) + (r_n - \hat{\pi})X(t)\big) dt - \tilde{\pi}X(t)dB(t) \\[\smallskipamount]
X(a) &= x_a.
\end{array}
\end{equation}

From Lemma~\ref{lemma: SolutionXFinal2}, the solution of equation~(\ref{eq: SDE-X-Final2}) is
\begin{equation}
\label{eq: X-Final-Solution}
\begin{array}{llll}
X(t) = x_ae^{R(t)-R(a)} + \int_a^t e^{R(t)-R(s)} (W(s) - \hat{c}(s)) ds,
\end{array}
\end{equation}
\noindent where we for notational convenience define the stochastic processes
\begin{equation}
\label{eq: kOGh}
\begin{array}{lll}
R(t):=& r_n t - \Pi(t), \\[\smallskipamount]
\Pi(t):= &\hat{\pi}t + \frac{1}{2}\tilde{\pi}^{2}t+\tilde{\pi}B(t).
\end{array}
\end{equation}


We also need to solve the stochastic differential equation for the wage process. This solution is given by the following lemma.

\begin{lemma}
 \label{lemma: WageSDE}
The solution of the stochastic differential equation
\begin{equation}
\label{eq: SDEWageEqn}
\begin{array}{lll}
dW(t) &=& (\alpha-[\hat{\pi}+\beta (1-\epsilon)] W(t))dt -\tilde{\pi}W(t)dB(t)+ \int_{\mb{R}} z \tilde{N}_1(dt,dz) \\[\smallskipamount]
&&-(1-\epsilon)W(t)\int_{\mb{R}} \tilde{N}_2(dt,dz)\\[\smallskipamount]
W(a)&=& w_a
\end{array}
\end{equation}
\noindent is
\begin{equation}
\label{eq: SDEWage}
\begin{array}{lllll}
\vspace{0.8cm} 
W(t) &=& w_a e^{-(\zeta(t)-\zeta(a))}+\int_{a}^{t}\alpha  e^{-(\zeta(t)-\zeta(s))}ds +\int_{a}^{t}\int_{\mb{R}}z e^{-(\zeta(t)-\zeta(s))}\tilde{N}_1(ds,dz),\\
\end{array}
\end{equation}
where
\[ \zeta(t):=\Pi(t) + \beta(1-\epsilon)t-\ln(\epsilon)\int_a^t\int_{\mb{R}}\tilde{N}_2(du,dz). \]

\end{lemma}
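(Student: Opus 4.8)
The plan is to treat \eqref{eq: SDEWageEqn} as a linear stochastic differential equation with jumps and solve it by the integrating-factor / variation-of-constants method. Since the coefficients are affine in $W$ and $\int_{|z|\geq 1}|z|\,\nu_i(dz)<\infty$ for $i=1,2$, the equation has a unique strong solution with finite moments (the standard Lipschitz/linear-growth existence theory for SDEs driven by $B$ and compensated Poisson random measures, as in {\O}ksendal and Sulem~\cite{OS-Levy}), so it suffices to exhibit one solution; the initial value $W(a)=w_a$ is immediate from \eqref{eq: SDEWage} since both integrals over $[a,a]$ vanish.

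Next I would solve the homogeneous equation $dZ(t)=-[\hat\pi+\beta(1-\epsilon)]Z(t)\,dt-\tilde\pi Z(t)\,dB(t)-(1-\epsilon)Z(t)\int_{\mb{R}}\tilde N_2(dt,dz)$, $Z(a)=1$. This is a Dol\'eans--Dade exponential: with $dL(t)=-[\hat\pi+\beta(1-\epsilon)]dt-\tilde\pi dB(t)-(1-\epsilon)\int_{\mb{R}}\tilde N_2(dt,dz)$ one has $Z(t)=\mc{E}(L)_t=\exp\!\big(L_t-L_a-\tfrac12\langle L^c\rangle_t\big)\prod_{a<s\leq t}(1+\Delta L_s)e^{-\Delta L_s}$. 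Using $\langle L^c\rangle_t=\tilde\pi^2(t-a)$, $\Delta L_s=-(1-\epsilon)$ at every jump time of $N_2$ (so $1+\Delta L_s=\epsilon>0$, keeping $Z$ strictly positive), and rewriting the compensated integral $\int_a^t\int_{\mb{R}}\tilde N_2$ in terms of the jump count $\int_a^t\int_{\mb{R}}N_2$, the terms carrying that jump count combine and, after simplification, should reduce $Z(t)$ to $\exp(-(\zeta(t)-\zeta(a)))$ with $\zeta$ as in the statement; I would perform this computation carefully and check the resulting $\zeta$ against the stated one. Then $Z(t)^{-1}=\exp(\zeta(t)-\zeta(a))$ is a well-defined positive semimartingale. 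Alternatively, one verifies $Z=e^{-(\zeta-\zeta(a))}$ directly by applying the It\^o--L\'evy formula for $f(x)=e^{-x}$ to the semimartingale $\zeta$.

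For the variation-of-constants step I would put $V(t):=Z(t)^{-1}W(t)$ and expand by It\^o's product formula, $dV=W\,d(Z^{-1})+Z^{-1}_{-}\,dW+d[Z^{-1},W]$. All the $W$-proportional contributions — including the continuous cross-variation term $-\tilde\pi^2 Z^{-1}W\,dt$ — cancel, exactly because $Z^{-1}$ inverts the homogeneous dynamics; and there is no covariation between $Z^{-1}$ and the \emph{inhomogeneous} forcing, since $\alpha\,dt$ has finite variation while the $N_1$-integral $\int_{\mb{R}}z\,\tilde N_1(dt,dz)$ jumps only at times almost surely disjoint from the $N_2$-jump times of $Z^{-1}$ (independence of the two Poisson random measures). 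Hence $dV(t)=\alpha Z(t)^{-1}dt+\int_{\mb{R}}z\,Z(t^-)^{-1}\tilde N_1(dt,dz)$. Integrating from $a$ to $t$ with $V(a)=w_a$ and multiplying through by $Z(t)$, using $Z(t)Z(s)^{-1}=e^{-(\zeta(t)-\zeta(s))}$, gives precisely \eqref{eq: SDEWage}.

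I expect the main obstacle to be the jump bookkeeping in the last two steps: getting the Dol\'eans--Dade product and the compensated-versus-uncompensated Poisson integrals exactly right, and rigorously justifying the cancellations together with the vanishing of the covariation between $Z^{-1}$ and the $N_1$-forcing. The integrability statements — that $\int_a^t\int_{\mb{R}}z\,Z(s^-)^{-1}\tilde N_1(ds,dz)$ is a bona fide martingale integral and that the right-hand side of \eqref{eq: SDEWage} has finite moments — should also be recorded, and rest on the standing assumption $\int_{|z|\geq 1}|z|\,\nu_i(dz)<\infty$.
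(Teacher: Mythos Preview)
Your proposal is correct and follows essentially the same integrating-factor route as the paper: the paper writes down $J(t)=\exp\big([\hat\pi+\beta(1-\epsilon)]t+\tilde\pi B(t)+\tfrac12\tilde\pi^2 t-\ln(\epsilon)\int_a^t\int_{\mb{R}}\tilde N_2(du,dz)\big)$ (your $Z(t)^{-1}$ up to a constant), applies the It\^o product rule for jump processes to $W(t)J(t)$, observes the $W$-proportional terms cancel, and integrates the resulting $d(WJ)=\alpha J\,dt+\int_{\mb{R}}zJ(t^-)\tilde N_1(dt,dz)$. Your additional framing via the Dol\'eans--Dade exponential and the explicit independence argument for the vanishing $[Z^{-1},N_1]$-covariation are sound elaborations of the same computation.
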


\noindent For the proof of Lemma~\ref{lemma: WageSDE}, see the Appendix.

The $\Pi(t)$ occurring in $\zeta(t)$ in Lemma~\ref{lemma: WageSDE} adjusts for changes in the num\'{e}raire consumption good, while the term $\beta(1-\epsilon)t-\ln(\epsilon)\int_a^t\int_{\mb{R}}\tilde{N}_2(du,dz)$ represents the geometric effect of unemployment risk.

By inserting the wage expression~(\ref{eq: SDEWage}) into $X_{\hat{c}}(T+a)$, we find

\[
\begin{array}{lll}
 X_{\hat{c}}(T+a) &=  x_a e^{R(T+a)-R(a)}  +  \int_0^{T} w_a e^{R(T+a)-R(t+a)} e^{- (\zeta(t+a)-\zeta(a))}dt \\[\smallskipamount]
&+ \int_0^{T} \left(\int_0^{t} \alpha e^{R(T+a)-R(t+a)} e^{-(\zeta(t+a)-\zeta(s+a))}ds\right)dt \mbox{ }\\[\smallskipamount]
&+\int_0^{T}\left( \int_0^t \int_{\mb{R}} ze^{R(T+a)-R(t+a)} e^{-(\zeta(t+a)-\zeta(s+a))} \tilde{N}_1(ds,dz)\right)dt  \\[\smallskipamount]
& -\int_0^{T} \tilde{\lambda}^{\frac{1}{\gamma -1}} e^{R(T+a)-R(t+a)} e^{\frac{(r_n - \hat{\pi} - \delta)(T-t)}{\gamma -1}}dt.
\end{array}
\]

\noindent Now, we would like to choose $\tilde{\lambda} \in \mb{R}$ such that $E[X_{\hat{c}}(T+a)]=K$. Since $\tilde{\lambda}$ is a constant, it is separable from the integral containing it. The equation can thus be solved, and we find

\[
\tilde{\lambda}^* =  (\hat{\mathcal{B}}^{-1} \; \hat{\mathcal{C}})^{(\gamma - 1)},
\]

where
\[
\begin{array}{rllll}
 \hat{\mathcal{C}} &:=& x_a e^{\hat{r}T} + \int_0^{T} w_{a}E[e^{R(T+a) - R(t+a)-(\zeta(t+a)-\zeta(a))}]dt \\
\hspace{1cm}\\
&& + \int_0^{T} \left(\int_a^t \alpha E[e^{R(T+a)-R(t+a)-(\zeta(t+a)-\zeta(s+a))}] ds\right)dt - K, \\[\smallskipamount]

\hat{r}&:=&r_n - \hat{\pi}, \\[\smallskipamount]

\hat{\mathcal{B}} &:=& \left( \int_0^{T} e^{(\hat{r}-\hat{\Gamma})(T-t)}dt \right)=\frac{e^{(\hat{r}-\hat{\Gamma})T} - 1}{\hat{r} - \hat{\Gamma}},
\end{array}
\]

and $\hat{\Gamma}:=\frac{\hat{r}-\delta}{1-\gamma}$.

Intuitively, $\hat{\mathcal{C}}$ is the expectation of the time $T+a$ future value of consumption, and $\hat{\mathcal{B}}^{-1}$ is a time $T+a$ future value risk aversion weight of the expected subjective real interest rate.

From this, the candidate for the optimal consumption process is

\begin{equation}
\label{eq: OptConsumption4}
 \hat{c}(t) =\hat{\mathcal{B}}^{-1}\cdot \hat{\mathcal{C}}\cdot  e^{-\hat{\Gamma} (T + a - t)}, \mbox{ }t \in [a,T+a].
\end{equation}

Now, we would like to apply the stochastic maximum principle, see the Appendix Theorem~\ref{thm: SufficientMaxPrinc}, to conclude that $\hat{c}(t)$ actually is the optimal consumption process. In order to do this, the boundedness conditions of Theorem~\ref{thm: SufficientMaxPrinc} must hold. However, this is the case for our problem. The functions $\sigma$ and $\gamma$ (for our specific problem) are continuous, and the Brownian motion $B(t)$ has a continuous version. Since the conditions of Theorem~\ref{thm: SufficientMaxPrinc} involve the expectation of the integral over a finite time interval, the integral of these processes will be finite. The adjoint processes $\tilde{p}$ and $\tilde{q}=\tilde{r}=0$ do not cause any finiteness-problems, hence, we may use the stochastic maximum principle.

Therefore, by the stochastic maximum principle, Theorem~\ref{thm: SufficientMaxPrinc}, the consumption process $\hat{c}$ is optimal if it is feasible, i.e. if $\hat{c}(t) \geq 0$ for all $t \in [a,T+a]$. Clearly, since $K \leq 0$, this holds. Therefore, $c^*(t) := \hat{c}(t)$, $t \in [a,T+a]$ is the optimal consumption process.

Note that when we are using a soft end constraint, there are no stochastic elements in the resulting optimal control. The result is that $\lambda$ is a constant, thus the shadow price of the constraint $K$ is constant. Furthermore the adjonint function $\tilde{p}_1(t)$ also becomes deterministic, making  the optimal consumption process deterministic. With the soft end constraint (type $(i)$), the consumer makes a consumption plan at time $a$ only expecting to reach the terminal level $K$. Then, independently of which outcomes of wage and inflation are realized, the consumer adjusts savings, keeping the same consumption path. The result is that the bank bears all market risk, and the value function of the consumer is independent of which states are realized.

To conclude, the soft end constraint implies that the consumer is unaffected by the uncertainty in inflation and wage level, in the sense that only expected total income and expected real interest rate are relevant for the consumption plan, and thus the value function.
%

\begin{remark}
 \label{remark: constraint_(ii)}
The version of problem~(\ref{eq: Agent}) with a type $(ii)$ constraint (i.e. an almost sure constraint on terminal savings) is more complicated. The technique is the same as before, except for determining the Lagrange multiplier $\tilde{\mu}^*$. Now, we would like to find $\tilde{\mu}^*$ s.t. $X_{\hat{c}}(T+a) = K$ a.s. To do this, we insert the expressions for $W$ and $\hat{c}$ into $X(T+a)$. Due to the stochastic nature of $\tilde{\mu}$, the Lagrange multiplier is no longer separable from the integral expression containing it, and hence it is more difficult to determine. However, the adjoint variables have the same interpretations as before. By this we deduce that the stochastic Lagrange multiplier has the ordinary shadow price interpretation, but in this case (with a hard end constraint) the shadow price of $K$ is a random variable, as the marginal utility of changing $K$ will depend on which states are realized.

The value function depends on the changes in the inflation and wage level, and thus it is stochastic. The consumer with the a.s. constraint bears all the market risk. This implies that consumption is strongly influenced by the uncertainty in the inflation and wage level, and in this case the lender bears no risk at all. The possibility of a loss of wage or an increased inflation has a negative effect for a power utility consumer, even though the consumer may be better off than in the soft end constraint case.

\end{remark}

\section{Concluding remarks}

In this paper we have derived a stochastic Lagrange multiplier method, and showed how this can be combined with the stochastic maximum principle for jump diffusions to solve constrained stochastic optimal control problems. As an application, we have studied an optimal consumption problem with inflation- and wage risk.


We have observed that the Lagrange multiplier behaves very differently, depending on whether the end constraint holds in expectation (type $(i)$ constraint) or almost surely (type $(ii)$). This affects the value function of the problem. In the first case, the value function is deterministic and almost identical to a version of the problem without uncertainty (see Syds{\ae}ther et al.~\cite{Sydseter} for such a problem). In the second case, the value function depends on the changes in the inflation and wage level, and thus it is stochastic. By economic intuition, this is equivalent to a consumer bearing no risk in the first case, and all risk in the second case. Hence, the behavior of the consumer is unaffected by inflation and wage risk when there is a terminal constraint which needs to hold in expectation, while the behavior is strongly affected by the risk when there is an a.s. end constraint.

The fact that different terminal constraints influence the outcome of the stochastic optimal consumption problem should be taken into consideration when stochastic optimal control is applied for analysis of practical problems, especially if the stochastic components have large variations, or when worst case scenarios are of interest.

\medskip

\textbf{Acknowledgments:} We are also very grateful for the help of our supervisors, Professor Bernt {\O}ksendal and Professor Kjell Arne Brekke, both at the University of Oslo.

\section{Some results from stochastic analysis}
\label{sec: Appendix}
Let $f : \mb{R}_+ \times \mb{R} \times \mb{R} \rightarrow \mb{R}$ and $g: \mb{R} \rightarrow \mb{R}$ be given, continuous functions. Consider the stochastic optimal control problem
\begin{equation}
 \label{eq: OpprProblem2}
\begin{array}{lll}
 \sup_{u \in \mc{A}} &E[\int_0 ^T f(t,X(t),u(t)) dt + g(X(T))] \\[\smallskipamount]
\mbox{subject to} \\[\smallskipamount]
&dX(t) = b(t,X(t),u(t))dt + \sigma(t,X(t),u(t))dB(t) \\[\smallskipamount]
&\hspace{1.35cm} + \int_{\mb{R}} \gamma(t,X(t^-),u(t^-),z) \tilde{N}(dt,dz)\\[\smallskipamount]
&X(0)=x
\end{array}
\end{equation}
\noindent where $\mc{U} \subset \mb{R}$ is a given set, $b: \mb{R}_+ \times \mb{R} \times \mc{U} \rightarrow \mb{R}$, $\sigma: \mb{R}_+ \times \mb{R} \times \mc{U} \rightarrow \mb{R}$ and $\gamma: \mb{R}_+ \times \mb{R} \times \mc{U} \times \mb{R} \rightarrow \mb{R}$. The control $u(t) = u(t,\omega): \mb{R}_+ \times \Omega \rightarrow \mc{U}$ is admissible, denoted $u \in \mc{A}$, if the dynamics of $X$ has a unique, strong solution for all $x \in \mb{R}$ and $E^x[\int_0 ^T f(t,X(t),u(t)) dt + g(X(T))] < \infty$.

In the following theorem, the function $H : [0,T] \times \mb{R} \times \mc{U} \times \mb{R} \times \mb{R} \times \mc{R} \rightarrow \mb{R}$ is the Hamiltonian function, defined by
\begin{equation}
 \label{eq: Hamiltonian}
\hspace{0.5cm} H(t,x,u,p,q,r) = f(t,x,u) + b(t,x,u)p + \sigma(t,x,u)q + \int_{\mb{R}} \gamma(t,x,u,z)\nu(dz)
\end{equation}
\noindent where $\mc{R}$ is the set of functions such that the integral above converges.
\begin{theorem}
 \label{thm: SufficientMaxPrinc}
(A sufficient maximum principle for stochastic optimal control with jumps, Framstad et al.~\cite{FOS})

\smallskip

Let $\hat{u}$ be an admissible control, i.e. $\hat{u} \in \mc{A}$, with corresponding state process $\hat{Y} = Y^{\hat{u}}$ and suppose there exists a solution $(\hat{p}(t), \hat{q}(t), \hat{r}(t,z))$ of the corresponding adjoint equation

\begin{equation}
\begin{array}{llll}
 \label{eq: AdjointJump}
dp(t)= &- \nabla_y H(t, Y(t), u(t), p(t), q(t), r(t,\cdot))dt \\[\smallskipamount]
&+ q(t)dB(t) + \int_{\mb{R}} r(t^-, z)\tilde{N}(dt,dz), &t<T, \\[\smallskipamount]
p(T) =& \nabla g(Y(T))
\end{array}
\end{equation}
\noindent satisfying
\[
 E \big[ \int_0^T(\hat{Y}(t)- Y^{u}(t))^2 \big\{ \hat{q}^2(t) + \int_{\mb{R}} r^2(t,z)\nu(dz) \big\} dt \big] < \infty
\]
\noindent and
\[
\begin{array}{lll}
E \big[ \hat{p}^2(t) \big\{ \sigma^2(t, Y^u(t),u(t)) + \int_{\mb{R}} \gamma^2(t,Y^u(t),u(t),z) \nu(dz) \big\} dt \big] \\[\smallskipamount]
\hspace{1cm} < \infty \mbox{ for all } u \in \mc{A}.
\end{array}
\]

Moreover, suppose that
\[
 H(t,\hat{Y}(t), \hat{u}(t), \hat{p}(t), \hat{q}(t), \hat{r}(t,\cdot)) = \sup_{v \in \mc{U}} H(t,\hat{Y}(t), v, \hat{p}(t), \hat{q}(t), \hat{r}(t,\cdot))
\]
\noindent for all $t$, that $g$ is a concave function of $y$ and that
\[
 \hat{H}(y) := \max_{v \in \mc{U}} H(t,y,v,\hat{p}(t), \hat{q}(t), \hat{r}(t,\cdot))
\]
\noindent exists and is a concave function of $y$, for all $t \in [0,T]$. Then, $\hat{u}$ is an optimal control.
\end{theorem}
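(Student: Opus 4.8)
The plan is to show directly that $\hat u$ is optimal by verifying $J(u) \le J(\hat u)$ for every $u \in \mc A$, where $J(u) := E[\int_0^T f(t,Y^u(t),u(t))\,dt + g(Y^u(T))]$. Write $\hat Y := Y^{\hat u}$ and abbreviate $\hat H(t) := H(t,\hat Y(t),\hat u(t),\hat p(t),\hat q(t),\hat r(t,\cdot))$ and $H^u(t) := H(t,Y^u(t),u(t),\hat p(t),\hat q(t),\hat r(t,\cdot))$. First I would use the definition \eqref{eq: Hamiltonian} to eliminate $f$ in favour of $H$, rewriting $J(\hat u)-J(u)$ as the sum of $E[\int_0^T(\hat H(t)-H^u(t))\,dt]$, a remainder consisting of the integrals of $-\hat p(t)\big(b(t,\hat Y,\hat u)-b(t,Y^u,u)\big)$ together with the analogous $\sigma$- and $\gamma$-terms, and the terminal term $E[g(\hat Y(T))-g(Y^u(T))]$. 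Concavity of $g$ and the terminal condition $\hat p(T)=\nabla g(\hat Y(T))$ of \eqref{eq: AdjointJump} yield $g(\hat Y(T))-g(Y^u(T)) \ge \hat p(T)\big(\hat Y(T)-Y^u(T)\big)$, so it remains to evaluate $E\big[\hat p(T)(\hat Y(T)-Y^u(T))\big]$.

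The second step is to apply the It\^o formula for jump diffusions to $t\mapsto \hat p(t)\big(\hat Y(t)-Y^u(t)\big)$, substituting the adjoint dynamics \eqref{eq: AdjointJump} for $d\hat p$ and the controlled dynamics from \eqref{eq: OpprProblem2} for $d\hat Y$ and $dY^u$, and using $\hat Y(0)=Y^u(0)=x$. Upon taking expectations the $dB$- and $\tilde N$-stochastic integrals drop out: this is exactly the role of the two moment conditions in the statement, the first making the martingale part of $\int (\hat Y-Y^u)\,d\hat p$ genuine, the second making the martingale part of $\int \hat p\,d(\hat Y-Y^u)$ genuine. The remaining finite-variation part produces $-(\hat Y(t)-Y^u(t))\,\nabla_y\hat H(t)$ together with the $b$-, $\sigma$- and $\gamma$-terms carrying the opposite sign to the remainder from the first step. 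Adding the two identities, the remainder cancels and one is left with
\[
J(\hat u) - J(u) \;\ge\; E\!\left[\int_0^T \Big\{\hat H(t) - H^u(t) - \big(\hat Y(t)-Y^u(t)\big)\,\nabla_y\hat H(t)\Big\}\,dt\right].
\]

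The final step is to check that the integrand is nonnegative pathwise. Put $\hat H(y) := \max_{v\in\mc U}H(t,y,v,\hat p(t),\hat q(t),\hat r(t,\cdot))$, which is assumed to exist and be concave. The maximality hypothesis gives $\hat H(t)=\hat H(\hat Y(t))$, while $H^u(t)\le \hat H(Y^u(t))$ since $u(t)\in\mc U$. Moreover $y\mapsto \hat H(y)-H(t,y,\hat u(t),\hat p(t),\hat q(t),\hat r(t,\cdot))$ is $\ge 0$ and vanishes at $y=\hat Y(t)$, hence is minimised there, so its gradient vanishes and $\nabla_y\hat H(\hat Y(t))=\nabla_y H(t,\hat Y(t),\hat u(t),\hat p(t),\hat q(t),\hat r(t,\cdot))=\nabla_y\hat H(t)$. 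Consequently the integrand is $\ge \hat H(\hat Y(t))-\hat H(Y^u(t))-(\hat Y(t)-Y^u(t))\,\nabla_y\hat H(\hat Y(t))\ge 0$ by the supergradient inequality for the concave function $\hat H$. Therefore $J(\hat u)\ge J(u)$ for all $u\in\mc A$, i.e.\ $\hat u$ is optimal.

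I expect the main obstacle to be the rigorous justification that the stochastic integrals arising from It\^o's formula have zero expectation, which is the sole purpose of the two integrability hypotheses and must be combined with suitable square-integrability of $\hat Y-Y^u$. A secondary delicate point is the envelope identity $\nabla_y\hat H(\hat Y(t))=\nabla_y H(t,\hat Y(t),\hat u(t),\ldots)$: beyond the ``a nonnegative function vanishing at a point has vanishing gradient there'' argument, it tacitly needs differentiability of $\hat H$ (or, more carefully, the observation that $\nabla_y H(t,\hat Y(t),\hat u(t),\ldots)$ is a supergradient of the concave function $\hat H$ at $\hat Y(t)$), which is the standard way this step is handled in Framstad et al.~\cite{FOS} and {\O}ksendal and Sulem~\cite{OS-Levy}.
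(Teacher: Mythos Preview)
Your proof is correct and follows exactly the standard argument of Framstad et al.~\cite{FOS} (also reproduced in {\O}ksendal and Sulem~\cite{OS-Levy}): rewrite $J(\hat u)-J(u)$ via the Hamiltonian, bound the terminal term using concavity of $g$ and the adjoint terminal condition, expand $E[\hat p(T)(\hat Y(T)-Y^u(T))]$ by It\^o's formula with the integrability hypotheses killing the martingale terms, and conclude via concavity of $\hat H$ together with the envelope/supergradient identity. The paper itself does not reproduce this argument at all---its proof consists solely of the citation ``See Framstad et al.~\cite{FOS}''---so your proposal in fact supplies more detail than the paper does, and does so along the same lines as the cited source.
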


\smallskip

\begin{proof}
 See Framstad et al.~\cite{FOS}.
\end{proof}

\medskip

\begin{lemma}
 \label{lemma: SolutionXFinal2}
Consider the stochastic differential equation,
\begin{equation}
\label{eq: SDE-Lemma2}
\begin{array}{llll}
  dX(t) &= \left( \mu(t) + \alpha X(t)   \right) dt + \beta X(t)dB(t) \\[\smallskipamount]
X(a) &= x_a
\end{array}
\end{equation}
\noindent where $\mu(t)$ is an adapted stochastic process and $\alpha, \beta \in \mb{R}$.

Then,
\begin{equation}
 \label{eq: SDE-Lemma-Solution2}
\begin{array}{lll}
X(t) &=& \exp(\alpha t - \frac{1}{2}\beta^2 t + \beta B(t)) \big( x_a\exp\{-\big(\alpha a - \frac{1}{2}\beta^2 a + \beta B(a)\big)\} \\[\smallskipamount]
&&+ \int_a^t \exp(-\alpha s + \frac{1}{2} \beta^2 s - \beta B(s))\mu(s)ds \big).
\end{array}
\end{equation}

\end{lemma}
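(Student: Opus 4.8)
The plan is to verify the claimed formula~\eqref{eq: SDE-Lemma-Solution2} directly by an integrating-factor argument, the standard technique for linear SDEs. First I would introduce the candidate integrating factor $Z(t) := \exp\bigl(-\alpha t + \tfrac{1}{2}\beta^2 t - \beta B(t)\bigr)$, which is precisely the (reciprocal of the) exponential prefactor appearing in the asserted solution. By It\^o's formula applied to the smooth function $(t,x)\mapsto e^{-\alpha t + \frac{1}{2}\beta^2 t - \beta x}$ composed with $B(t)$, one computes $dZ(t) = Z(t)\bigl(-\alpha + \tfrac12\beta^2 + \tfrac12\beta^2\bigr)dt - \beta Z(t)\,dB(t) = Z(t)\bigl(-\alpha+\beta^2\bigr)dt - \beta Z(t)\,dB(t)$; the key point is that the $\tfrac12\beta^2$ from the quadratic-variation term doubles the drift so that $Z$ solves $dZ = Z(\beta^2-\alpha)\,dt - \beta Z\,dB$. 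Note there is no jump term here since $B$ is continuous and $\mu,\alpha,\beta$ carry no jump part in this lemma.

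Next I would apply the It\^o product rule to $Z(t)X(t)$, using the dynamics~\eqref{eq: SDE-Lemma2} for $X$ and the dynamics just derived for $Z$, together with the cross-variation $d\langle Z,X\rangle_t = Z(t)\cdot\beta X(t)\cdot(-\beta)\,dt = -\beta^2 Z(t)X(t)\,dt$. Collecting terms:
\[
d\bigl(Z(t)X(t)\bigr) = X\,dZ + Z\,dX + d\langle Z,X\rangle
= Z(t)\mu(t)\,dt + \bigl(\text{terms in }X\bigr)\,dt + \bigl(\text{terms}\bigr)\,dB.
\]
The expectation here is that the $X$-drift terms cancel exactly: $X\bigl(\beta^2-\alpha\bigr)Z + Z\alpha X - \beta^2 ZX = 0$, and likewise the $dB$ terms $-\beta ZX + Z\beta X = 0$ cancel. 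Hence $d\bigl(Z(t)X(t)\bigr) = Z(t)\mu(t)\,dt$, a pure bounded-variation differential.

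Integrating from $a$ to $t$ then gives $Z(t)X(t) = Z(a)x_a + \int_a^t Z(s)\mu(s)\,ds$, and multiplying through by $Z(t)^{-1} = \exp\bigl(\alpha t - \tfrac12\beta^2 t + \beta B(t)\bigr)$ produces exactly the stated expression~\eqref{eq: SDE-Lemma-Solution2}, once one rewrites $Z(a) = \exp\bigl(-(\alpha a - \tfrac12\beta^2 a + \beta B(a))\bigr)$ and $Z(s)^{-1}$ inside the integral accordingly. I would close by remarking on well-posedness: since the coefficients $x\mapsto \mu(t)+\alpha x$ and $x\mapsto\beta x$ are affine and $\mu$ is adapted (and should be assumed, say, in $L^1([a,t]\times\Omega)$ locally so the stochastic integrals are defined), standard existence-uniqueness for linear SDEs guarantees the solution is unique, so the verified candidate is \emph{the} solution. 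The only mildly delicate step is bookkeeping the two separate $\tfrac12\beta^2$ contributions — one from $d\langle Z\rangle$ in computing $dZ$, one from $d\langle Z,X\rangle$ in the product rule — and checking they combine with the drift $\alpha$ to cancel cleanly; everything else is routine.
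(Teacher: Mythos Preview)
Your argument is correct and follows essentially the same route as the paper's own proof: introduce the integrating factor $Z(t)=\exp(-\alpha t+\tfrac12\beta^2 t-\beta B(t))$, compute $dZ$ by It\^o's formula, apply the It\^o product rule to $Z(t)X(t)$ so that all $X$-terms cancel and only $Z(t)\mu(t)\,dt$ remains, then integrate and divide by $Z(t)$. The paper carries this out with the same integrating factor (denoted $J$) and the same cancellation, so there is no substantive difference.
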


\begin{proof}
 The idea is to get rid of the terms of the SDE involving $X(t)$ by multiplying with the integrating factor

\[
 J(t) :=\exp\{-\big(\alpha t + \frac{1}{2}\beta^2 t - \beta B(t)\big)\}.
\]

\noindent By It\^{o}'s product rule (see Exercise 4.3 i {\O}ksendal~\cite{Oksendal}),

\[
 d(X(t)J(t)) = X(t)dJ(t) + J(t)dX(t) + dX(t)dJ(t).
\]

\noindent It\^{o}'s formula implies that
\[
 dJ(t)= \left( (-\alpha + \beta^2)dt -\beta dB(t) \right) \exp(\alpha t + \frac{1}{2}\beta^2 t - \beta B(t)).
\]

\noindent Hence,
\[
 d(X(t)J(t)) = \exp (\alpha t + \frac{1}{2}\beta^2 t - \beta B(t)) \mu(t) dt.
\]

\noindent So, by integrating from $a$ to $t$ on both sides and multiplying by $\frac{1}{J(t)}$, we find the solution

\[
\begin{array}{lll}
X(t) &=& \exp(\alpha t - \frac{1}{2}\beta^2 t + \beta B(t)) \big( x_a\exp\{-\big(\alpha a - \frac{1}{2}\beta^2 a + \beta B(a)\big)\} \\[\smallskipamount]
&&+ \int_a^t \exp(-\alpha s + \frac{1}{2} \beta^2 s - \beta B(s))\mu(s)ds \big).
\end{array}
\]
This concludes the proof.
\end{proof}

\medskip

\begin{lemma}
\label{lemma: BSDEVetIkke}
(Solution of linear BSDE, Proposition 1.3, El Karoui et al.~\cite{ElKarouiEtAl}) Consider a linear BSDE of the form
\[
 \begin{array}{llll}
  -dY(t) &= (\phi(t) + Y(t)\beta(t) + Z(t)\mu(t))dt - Z(t)dB(t) \\[\smallskipamount]
Y(\bar{T}) &= \xi(\bar{T})
 \end{array}
\]
\noindent where $\xi(\bar{T})$ is an $\mc{F}_{\bar{T}}$-measurable random variable. This BSDE has a unique solution $(Y,Z)$, where $Y$ is explicitly given by
\[
 Y(t) = E[\xi(\bar{T})\Gamma_{t,\bar{T}} + \int_t^{\bar{T}} \Gamma_{t,s} \phi(s)ds | \mc{F}_t]
\]
\noindent where
\[
\begin{array}{llll}
 d\Gamma_{t,s} &= \Gamma_{t,s}(\beta(s)ds + \mu(s)dB(s)) \\[\smallskipamount]
\Gamma_{t,t} &=1, \mbox{ } \Gamma_{t,s} \Gamma_{s,u}= \Gamma_{t,u} \mbox{ } \forall \mbox{ } t \leq s \leq u.
\end{array}
\]

\end{lemma}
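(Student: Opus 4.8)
The plan is to run the classical variation-of-constants argument for linear BSDEs. First I would settle existence and uniqueness: the driver $(s,y,z)\mapsto \phi(s)+y\beta(s)+z\mu(s)$ is affine, hence globally Lipschitz in $(y,z)$ uniformly in $(s,\omega)$ as soon as $\beta,\mu$ are bounded (and $\phi,\xi(\bar{T})$ square integrable), so the standard Pardoux--Peng theory applies and produces a unique adapted pair $(Y,Z)$ with $E[\sup_{t\le\bar{T}}|Y(t)|^2 + \int_0^{\bar{T}}|Z(s)|^2\,ds]<\infty$. I would then record that $\Gamma_{t,s}$, being the solution of the linear SDE $d\Gamma_{t,s}=\Gamma_{t,s}(\beta(s)ds+\mu(s)dB(s))$, $\Gamma_{t,t}=1$, is the Dol\'{e}ans--Dade exponential $\Gamma_{t,s}=\exp(\int_t^s(\beta(u)-\tfrac12\mu(u)^2)du+\int_t^s\mu(u)dB(u))$; the multiplicative property $\Gamma_{t,s}\Gamma_{s,u}=\Gamma_{t,u}$ follows from uniqueness for this SDE, and boundedness of $\beta,\mu$ furnishes the moment bounds $E[\sup_{s\le\bar{T}}\Gamma_{t,s}^p]<\infty$ for every $p$, which will be used below.

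The heart of the proof is It\^{o}'s product rule applied to $s\mapsto\Gamma_{t,s}Y(s)$ on $[t,\bar{T}]$. Using $dY(s)=-(\phi(s)+Y(s)\beta(s)+Z(s)\mu(s))ds+Z(s)dB(s)$ together with the dynamics of $\Gamma_{t,s}$, the covariation term contributes $\Gamma_{t,s}\mu(s)Z(s)ds$, and in the $ds$-part the terms carrying $Y\beta$ and $Z\mu$ cancel exactly, leaving
\[
d\big(\Gamma_{t,s}Y(s)\big)=-\Gamma_{t,s}\phi(s)\,ds+\Gamma_{t,s}\big(Y(s)\mu(s)+Z(s)\big)\,dB(s).
\]
Hence $s\mapsto\Gamma_{t,s}Y(s)+\int_t^s\Gamma_{t,u}\phi(u)\,du$ is a local martingale on $[t,\bar{T}]$. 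Granting that it is a true martingale, evaluating at $s=\bar{T}$ and $s=t$ and taking $E[\,\cdot\mid\mathcal{F}_t]$ gives $Y(t)=E[\Gamma_{t,\bar{T}}\xi(\bar{T})+\int_t^{\bar{T}}\Gamma_{t,u}\phi(u)\,du\mid\mathcal{F}_t]$, which is the asserted formula; the component $Z$ is then read off as the integrand in the martingale representation of the right-hand side.

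The main obstacle is the true-martingale step, i.e. showing that $\int_t^{\cdot}\Gamma_{t,u}(Y(u)\mu(u)+Z(u))\,dB(u)$ is not merely a local martingale. I would handle this by combining the a priori $L^2$-estimates on $(Y,Z)$ with the moment bounds on $\Gamma_{t,\cdot}$ noted above: a Cauchy--Schwarz and Burkholder--Davis--Gundy estimate shows the bracket of that stochastic integral is integrable, so it is a genuine martingale with zero expectation. Equivalently, one localizes along stopping times, takes expectations, and passes to the limit by dominated convergence using the same bounds. Uniqueness of $(Y,Z)$ is already contained in the Pardoux--Peng step (or follows directly from uniqueness of the martingale representation).
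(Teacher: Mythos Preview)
Your argument is correct and is precisely the classical variation-of-constants proof for linear BSDEs: apply It\^{o}'s product rule to $\Gamma_{t,s}Y(s)$, observe the cancellation in the drift, upgrade the resulting local martingale to a true martingale via the $L^2$ bounds on $(Y,Z)$ together with the exponential moments of $\Gamma$, and take conditional expectations. The computation of $d(\Gamma_{t,s}Y(s))$ and the handling of the martingale step are both accurate.

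There is nothing to compare against in the paper itself: the paper's ``proof'' of this lemma is simply the sentence \emph{``See El Karoui et al.''}, i.e.\ the result is quoted without argument. Your write-up therefore supplies strictly more than the paper does, and it follows the same route that the cited reference (El Karoui--Hamad\`{e}ne--Matoussi, and ultimately El Karoui--Peng--Quenez) takes. One small point worth making explicit if you polish this: the hypotheses under which Pardoux--Peng applies (boundedness of $\beta,\mu$; square-integrability of $\phi$ and of $\xi(\bar T)$) are implicit in the lemma as stated in the paper, so it is appropriate that you flag them as standing assumptions rather than deriving them.
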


\begin{proof}
 See El Karoui et al.~\cite{ElKarouiEtAl}.
\end{proof}

Finally, we give the proof of Lemma~\ref{lemma: WageSDE}.

\begin{proof}(Proof of Lemma~\ref{lemma: WageSDE})
 We solve the SDE by multiplying with the integrating factor
\[
J(t)=\exp \left([\hat{\pi}+\beta(1-\epsilon)]t+\tilde{\pi}B(t)+\frac{1}{2}\tilde{\pi}^{2}t-\ln(\epsilon)\int_a^t\int_{\mb{R}}\tilde{N}_2(du,dz)\right),
\]
\noindent (chosen to get rid of the terms involving $W$ in equation~(\ref{eq: SDEWageEqn})). By the It\^{o} product rule for jump processes (see {\O}ksendal and Sulem~\cite{OS-Levy}, Exercise 1.2),
\[
\begin{array}{lllll}
d(W(t)J(t))&=&J(t)\big(\alpha -[\hat{\pi}+\beta(1-\epsilon)]W(t)\big)dt-J(t)\tilde{\pi}W(t)dB(t) \\[\smallskipamount]
&&+W(t)J(t)\big(\hat{\pi}+\beta (1-\epsilon)+\tilde{\pi}^{2}\big)dt+W(t)J(t)\tilde{\pi}dB(t) \\[\smallskipamount]
&&-\tilde{\pi}^{2}W(t)J(t)dt+\int_{\mb{R}}J(t-)z\tilde{N}_1(ds,dz)\\[\smallskipamount]
&&+\int_{\mb{R}}\big(-(1-\epsilon)W(t)J(t)(\frac{1}{\epsilon}-1)+W(t)J(t)(\frac{1}{\epsilon}-1)\\[\smallskipamount]
&&-W(t)J(t)(1-\epsilon)\big)\tilde{N}_2(dt,dz)\\[\smallskipamount]
&=&\alpha J(t) dt + \int_{\mb{R}}zJ(t-)\tilde{N}_1(dt,dz).\\[\smallskipamount]

\end{array}
\]

So,
\[
W(t)J(t)=w_aJ(a)+ \int_{a}^{t}\alpha J(s) ds +\int_{a}^{t}\int_{\mb{R}}zJ(s-)\tilde{N}_1(ds,dz),
\]
which gives the solution in equation~(\ref{eq: SDEWage}).
\end{proof}

\end{document}